\newenvironment{namelist}[1]{%
\begin{list}{}
{

\settowidth{\labelwidth}{#1}
\setlength{\leftmargin}{1.1\labelwidth}
}
}{%
\end{list}}
\newcommand{\ncom}{\newcommand}
\ncom{\ul}{\underline}
\ncom{\beq}{\begin{equation}}
\ncom{\eeq}{\end{equation}}
\ncom{\bea}{\begin{eqnarray*}}
\ncom{\eea}{\end{eqnarray*}}
\ncom{\beqa}{\begin{eqnarray}}
\ncom{\eeqa}{\end{eqnarray}}
\ncom{\nno}{\nonumber}
\ncom{\non}{\nonumber}
\ncom{\ds}{\displaystyle}
\ncom{\half}{\frac{1}{2}}
\ncom{\mbx}{\makebox{.25cm}}
\ncom{\hs}{\mbox{\hspace{.25cm}}}
\ncom{\rar}{\rightarrow}
\ncom{\Rar}{\Rightarrow}
\ncom{\noin}{\noindent}
\ncom{\bc}{\begin{center}}
\ncom{\ec}{\end{center}}
\ncom{\sz}{\scriptsize}
\ncom{\rf}{\ref}
\ncom{\s}{\sqrt{2}}
\ncom{\sgm}{\sigma}
\ncom{\Sgm}{\Sigma}
\ncom{\psgm}{\sigma^{\prime}}
\ncom{\dt}{\delta}
\ncom{\Dt}{\Delta}
\ncom{\lmd}{\lambda}
\ncom{\Lmd}{\Lambda}
\ncom{\Th}{\Theta}
\ncom{\e}{\eta}
\ncom{\eps}{\epsilon}
\ncom{\pcc}{\stackrel{P}{>}}
\ncom{\lp}{\stackrel{L_{p}}{>}}
\ncom{\dist}{{\rm\,dist}}
\ncom{\sspan}{{\rm\,span}}
\ncom{\re}{{\rm Re\,}}
\ncom{\im}{{\rm Im\,}}
\ncom{\sgn}{{\rm sgn\,}}
\ncom{\ba}{\begin{array}}
\ncom{\ea}{\end{array}}
\ncom{\hone}{\mbox{\hspace{1em}}}
\ncom{\htwo}{\mbox{\hspace{2em}}}
\ncom{\hthree}{\mbox{\hspace{3em}}}
\ncom{\hfour}{\mbox{\hspace{4em}}}
\ncom{\vone}{\vskip 2ex}
\ncom{\vtwo}{\vskip 4ex}
\ncom{\vonee}{\vskip 1.5ex}
\ncom{\vthree}{\vskip 6ex}
\ncom{\vfour}{\vspace*{8ex}}
\ncom{\norm}{\|\;\;\|}
\ncom{\integ}[4]{\int_{#1}^{#2}\,{#3}\,d{#4}}
\ncom{\vspan}[1]{{{\rm\,span}\{ #1 \}}}
\ncom{\dm}[1]{ {\displaystyle{#1} } }
\ncom{\ri}[1]{{#1} \index{#1}}
\newtheorem{theorem}{\bf Theorem}[section]
\newtheorem{remark}{\bf Remark}[section]
\newtheorem{proposition}{Proposition}[section]
\newtheorem{corollary}{Corollary}[section]
\newtheorem{definition}{Definition}[section]
\newtheoremstyle
    {remarkstyle}
    {}
    {11pt}
    {}
    {}
    {\bfseries}
    {:}
    {     }
    {\thmname{#1} \thmnumber{#2} }
\theoremstyle{remarkstyle}
\begin{document}

\newpage

\begin{center}
{\Large \bf Mittag-Leffler L\'evy Processes}
\end{center}
\vone

\begin{center}
{\bf  Arun Kumar$^*$ and N. S. Upadhye$^{**}$}\\
{\it *Indian Statistical Institute, Chennai Center, Taramani,  Chennai-600036, India\\
and\\
**Department of Mathematics, Indian Institute of Technology Madras, Chennai 600036, India}
\end{center}

\vtwo
\begin{center}
\noindent{\bf Abstract}
\end{center}
In this article, we introduce Mittag-Leffler L\'evy process and provide two alternative representations of this process. First, in terms of Laplace transform of the marginal densities and next as a subordinated stochastic process. Both these representations are useful in analyzing the properties of the process. Since integer order moments for this process are not finite, we obtain fractional order moments. Its density function and corresponding L\'evy measure density is also obtained.  Further, heavy tailed behavior of densities and stochastic self-similarity of the process is demonstrated. Our results generalize and complement the results available on Mittag-Leffler distribution in several directions. 

\vone \noindent{\it Key words:} Mittag-Leffler distribution, subordinated stochastic processes, L\'evy densities.
\vtwo
\setcounter{equation}{0}

\section{Introduction}
In recent years there is an increased attention on Mittag-Leffler (ML) function as well as on ML probability distribution.  ML distribution is a natural generalization of exponential distribution. ML waiting times are used in defining a fractional version of standard Poisson process that is also called fractional Poisson process (see e.g. Meerschaert et al. 2010; Repin and Saichev, 2000; Laskin, 2003; Beghin and Orsingher, 2009).
Let $X$ be a ML distributed random variable (rv) with parameters $0<\alpha\leq 1$ and $\lambda>0.$ Then the Laplace transform (LT) of $X$ is given by (see e.g. Cahoy et al. 2010; Pillai, 1990)
\begin{equation}\label{LT-ML}
\mathbb{E} e^{-uX} = \frac{\lambda}{\lambda + u ^{\alpha}}.
\end{equation}

Pillai (1990) established the infinite divisibility and geometric infinite divisibility for $X$, along with other interesting properties. The ML density function can be written in terms of ML function as
$f_{X}(x;\lambda, \alpha) = 1- E_{\alpha, \lambda}(-x^{\alpha})$, $x >0$, where 
\begin{equation}
E_{\alpha, \lambda}(z) = \frac{1}{2\pi i} \int_{C} \frac{t^{\alpha - \lambda}}{t^{\alpha}-z}dt, 
\end{equation}
with contour $C$ starts and ends at $-\infty$ and circles around the singularities and branch points of the integrand (see e.g. Gorenflo et al. 2014). The infinite series representation of $E_{\alpha, \lambda}(z)$ is given by
\begin{equation}\label{ML-Series}
E_{\alpha, \lambda}(z) = \sum_{k=0}^{\infty} \frac{z^k}{\Gamma(\alpha k+\lambda)}.
\end{equation}

Note that ML distributions are heavy tailed and in the domain of attraction of stable laws (see e.g. Feller, 1971), which can be seen as follows. Let $X_1, X_2,\ldots, X_n$ be iid ML distributed random variables with LT as in \eqref{LT-ML}, then LT of the rescaled rv  $T_n = \lambda^{1/\alpha}n^{-1/\alpha}\sum_{i=1}^{n} X_i$ is given by
\begin{equation}\label{stable-limit}
\mathbb{E}(e^{-uT_n}) = (1+u^{\alpha}/n)^{-n}\rightarrow e^{-u^{\alpha}},\;{as}\;n\rightarrow\infty
\end{equation}
 which is the LT of a stable distribution.

\setcounter{equation}{0}
\section{Mittag-Leffler L\'evy Process (MLLP)}
As mentioned earlier, ML distribution is infinitely divisible and hence we can define a L\'evy process corresponding to this distribution. We characterize the MLLP by defining the LT of its marginal density as follows.
\begin{definition}[MLLP]
A stochastic process $M_{\alpha, \lambda}(t)$ is a MLLP if it is a L\'evy process and its marginal density has the LT given by
\begin{equation}\label{LT-MLLP}
\mathbb{E} e^{-u M_{\alpha, \lambda}(t)} = \left(\frac{\lambda}{\lambda + u ^{\alpha}}\right)^t.
\end{equation}
\end{definition}

Recently, subordinated stochastic processes have attracted much attention due to their applications in fractional partial differential equations, stochastic volatility modeling in finance, and for interesting probabilistic properties.  Next, we derive subordinated stochastic representation of MLLP. 

\noindent
Let $G_{\lambda, \beta}(t)$ be a gamma L\'evy process such that $G_{\lambda, \beta}(t) \sim$ gamma$(\lambda, \beta t)$ with density
\begin{align*}
f_{G_{\lambda, \beta}(t)}(y) = \frac{\lambda^{\beta t}}{\Gamma{(\beta t)}} y^{\beta t-1} e^{-\lambda y}, \;y>0, \;t>0.
\end{align*}

\noindent Further, the LT of $G_{\lambda, \beta}(t)$ is given by
\begin{align} \label{LT-Gamma}
\mathbb{E}e^{-u G_{\lambda, \beta}(t)} = \left(\frac{\lambda}{\lambda + u}\right)^{\beta t}.
\end{align} 
Let $S_{\alpha}(t)$ be a stable L\'evy process with stability parameter $\alpha \in(0,1).$ Note that stable process has strictly increasing sample paths and its tail probability decays polynomially such that $\mathbb{P} (S_{\alpha}(t)>x) \sim d x^{-\alpha},$ as $x\rightarrow\infty$ for some constant $d>0$. Stable processes are self-similar such that $S_{\alpha}(ct) \stackrel{d} = c^{1/\alpha}S_{\alpha}(t),\; c>0$, and the LT of $S_{\alpha}(t)$ is given by

\begin{align} \label{LT-Stable}
\mathbb{E}e^{-u S_{\alpha}(t)} = e^{-tu^{\alpha}}.
\end{align}

\begin{definition}[Alternative representation of MLLP]
Let $S_{\alpha}(t)$ be a stable L\'evy process and $G_{\lambda, \beta}(t)$ be gamma L\'evy process independent of $S_{\alpha}(t)$. A three parameter MLLP can be defined as a subordinated stochastic process by using gamma L\'evy process as a subordinator such that
\begin{equation}\label{MLLP-Sub}
M_{\alpha, \lambda, \beta}(t) :=  S_{\alpha}\left(G_{\lambda, \beta}(t)\right).
\end{equation}
\end{definition}

Note that 
\begin{align*}
\mathbb{E} e^{-u M_{\alpha, \lambda, \beta}(t)} &= \mathbb{E} e^{-u   S_{\alpha}\left(G_{\lambda, \beta}(t)\right)}\\
& = \mathbb{E}\left(\mathbb{E} e^{-u S_{\alpha}\left(G_{\lambda, \beta}(t)\right)}|G_{\lambda, \beta}(t)\right)\\
& = \mathbb{E} \left(e^{-u^{\alpha} G_{\lambda, \beta}(t)}\right)\;\; (\mbox{using}\; \eqref{LT-Stable})\\
& = \left(\frac{\lambda}{\lambda + u^{\alpha}}\right)^{\beta t} \;\; (\mbox{using} \;\eqref{LT-Gamma}),
\end{align*}
which is same as in \eqref{LT-MLLP} for $\beta=1.$
For simplicity, throughout the article, we assume $\beta =1$ and denote $M_{\alpha, \lambda}(t) := M_{\alpha, \lambda, 1}(t)$. 
We have, 
\begin{align}\label{ALR}
M_{\alpha, \lambda}(t): = S_{\alpha}(G_{\lambda,1}(t)) \stackrel{d} = G_{\lambda,1}(t)^{1/\alpha} S_{\alpha}(1), 
\end{align}
using self-similar property of stable processes. Hence ML distribution is a mixture of gamma and stable distributions. It is well known that given a random variable $U$, uniformly distributed on $[0,1]$, the transformed random variable $|\ln U|/\lambda$ is exponentially distributed with mean $\lambda$. Thus, \eqref{ALR} can be used in generating ML random numbers. Let $U_1, U_2$ and $U_3$ be independent and uniformly distributed over [0,1]. Note that for $t=1$, $G_{\lambda, 1}(1) \sim$ exp($\lambda$). Thus by using Kanter (1975) result on stable random number generation and \eqref{ALR}, we have
\begin{equation}
M_{\alpha, \lambda}(1)\stackrel{d} = \frac{\sin(\alpha\pi U_1)[\sin((1-\pi)U_2)]^{1/\alpha-1}}{[\sin(\pi U_1)]^{1/\alpha}|\ln U_2|^{1/\alpha-1}} \frac{|\ln U_3|^{1/\alpha}}{\lambda^{1/\alpha}}.
\end{equation}

\noindent Similar to \eqref{stable-limit}, we have convergence in distribution for MLLP process.
\begin{proposition}
$$
\lim_{t\rightarrow \infty} \frac{\lambda^{1/\alpha}M_{\alpha,\lambda}(t)}{t^{1/\alpha}} \stackrel{d}\rightarrow S_{\alpha}(1).
$$
\end{proposition}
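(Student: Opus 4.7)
The plan is to prove convergence in distribution via the continuity theorem for Laplace transforms, which is well-suited here because the defining property of $M_{\alpha,\lambda}(t)$ in \eqref{LT-MLLP} is precisely its LT. Let $Y_t := \lambda^{1/\alpha} M_{\alpha,\lambda}(t)/t^{1/\alpha}$. I would first compute the LT of $Y_t$ by a direct rescaling argument from \eqref{LT-MLLP}:
\begin{equation*}
\mathbb{E} e^{-u Y_t} = \mathbb{E} \exp\!\left(-\frac{u\lambda^{1/\alpha}}{t^{1/\alpha}}\, M_{\alpha,\lambda}(t)\right) = \left(\frac{\lambda}{\lambda + u^\alpha \lambda/t}\right)^t = \left(1 + \frac{u^\alpha}{t}\right)^{-t}.
\end{equation*}

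The next step is to let $t\to\infty$, using the elementary calculus limit $(1+x/t)^{-t}\to e^{-x}$ applied with $x = u^\alpha$ (fixed, $u\ge 0$). This yields $\mathbb{E} e^{-uY_t} \to e^{-u^\alpha}$, which by \eqref{LT-Stable} (with $t=1$) is exactly the LT of $S_\alpha(1)$. Since the pointwise limit of the LTs is continuous at $u=0$ and is itself the LT of a proper probability distribution on $(0,\infty)$, the continuity theorem for Laplace transforms of nonnegative random variables yields $Y_t \stackrel{d}{\to} S_\alpha(1)$, as required.

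There is no real obstacle: everything reduces to the exact self-scaling property of the LT $\lambda/(\lambda+u^\alpha)$ under the map $u\mapsto u\lambda^{1/\alpha}/t^{1/\alpha}$, followed by the standard exponential limit. The only point worth flagging is that the constants $\lambda^{1/\alpha}$ in the numerator and $t^{1/\alpha}$ in the denominator are precisely what is needed to cancel the outer $\lambda$ and to rescale $t$ out of the exponent, so the calculation is essentially forced by the form of \eqref{LT-MLLP}; this is also the direct process-level analogue of the iid limit \eqref{stable-limit}, which provides a useful sanity check.
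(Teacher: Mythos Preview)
Your proof is correct. It is, however, a different route from the paper's own argument. The paper does not work with the Laplace transform of the rescaled variable; instead it invokes the subordination representation \eqref{ALR}, writing
\[
\frac{\lambda^{1/\alpha}M_{\alpha,\lambda}(t)}{t^{1/\alpha}}\stackrel{d}{=}\lambda^{1/\alpha}\Bigl(\frac{G_{\lambda,1}(t)}{t}\Bigr)^{1/\alpha}S_{\alpha}(1),
\]
and then uses the strong law for subordinators, $G_{\lambda,1}(t)/t\to\mathbb{E}G_{\lambda,1}(1)=1/\lambda$ a.s., together with a Slutsky-type product argument to conclude convergence to $S_{\alpha}(1)$. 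Your approach is more elementary and self-contained: it uses only the defining LT \eqref{LT-MLLP} and the classical limit $(1+x/t)^{-t}\to e^{-x}$, exactly mirroring the i.i.d.\ computation \eqref{stable-limit} at the process level, and avoids appealing to the subordination identity or to Bertoin's LLN for subordinators. The paper's argument, on the other hand, makes the mechanism behind the limit transparent --- the gamma time change averages out to its mean, leaving the stable factor --- and would generalise immediately to other outer subordinators with finite mean.
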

\begin{proof}
We have
\begin{align*}
\frac{\lambda^{1/\alpha}M_{\alpha,\lambda}(t)}{t^{1/\alpha}} &\stackrel{d}= \lambda^{1/\alpha}\left(\frac{G_{\lambda,1}(t)}{t}\right)^{1/\alpha} S_{\alpha}(1)\;\;(\mbox{using }\; \eqref{ALR})\\
& \stackrel{d} \rightarrow \lambda^{1/\alpha}(\mathbb{E}G_{\lambda,1}(1))^{1/\alpha}S_{\alpha}(1) = S_{\alpha}(1)\;\;\mbox{as}\;t\rightarrow\infty, 
\end{align*}
where the last convergence follows from the fact that for a subordinator $T_t$, $\lim_{t\rightarrow\infty}T_t/t\stackrel{a.s.}\rightarrow \mathbb{E}T_1$ (see e.g. Bertion, 1996, p. 92). Further, $\mathbb{E}G_{\lambda,1}(1) = 1/\lambda$ and the result that if $X_n\stackrel{d}\rightarrow X$ and $Y_n\stackrel{d}\rightarrow c$ for some constant $c$, then $X_nY_n\stackrel{d}\rightarrow cX$ as $n\rightarrow\infty$. 
\end{proof}

\setcounter{equation}{0}
\section{MLLP density}
We next obtain the infinite series representation of density function of MLLP. The main idea is to invert the LT given in \eqref{LT-MLLP}.
\begin{proposition}\label{marginal-density}
MLLP has marginal densities in following infinite series form
\begin{align}\label{MLLP-density}
f_{M_{\alpha,\lambda}(t)}(x) = \sum_{k=0}^{\infty} (-1)^k \frac{\lambda^{t+k} \Gamma(t+k)}{\Gamma(t)\Gamma(k+1)}\frac{x^{\alpha(t+k)-1}}{\Gamma(\alpha(t+k))},\; x>0.
\end{align}
\end{proposition}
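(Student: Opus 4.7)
The plan is to invert the Laplace transform \eqref{LT-MLLP} by expanding it in a binomial series in $u^{-\alpha}$ and then applying the elementary transform pair $\mathcal{L}\{x^{s-1}/\Gamma(s)\}(u)=u^{-s}$ term by term. Concretely, I would first pull out a factor of $u^{-\alpha t}$ to write
\begin{align*}
\Bigl(\frac{\lambda}{\lambda+u^{\alpha}}\Bigr)^{t}
=\lambda^{t}u^{-\alpha t}\Bigl(1+\lambda u^{-\alpha}\Bigr)^{-t},
\end{align*}
which, for $u$ with $|\lambda u^{-\alpha}|<1$ (i.e.\ $u$ in a right half-plane $\mathrm{Re}(u)>R$ for $R$ sufficiently large), admits the absolutely convergent generalized binomial expansion
\begin{align*}
\Bigl(1+\lambda u^{-\alpha}\Bigr)^{-t}
=\sum_{k=0}^{\infty}\binom{-t}{k}\lambda^{k}u^{-\alpha k}
=\sum_{k=0}^{\infty}(-1)^{k}\frac{\Gamma(t+k)}{\Gamma(t)\,k!}\,\lambda^{k}u^{-\alpha k}.
\end{align*}
Substituting back yields a series for the LT whose $k$-th term is $(-1)^{k}\tfrac{\lambda^{t+k}\Gamma(t+k)}{\Gamma(t)\,k!}\,u^{-\alpha(t+k)}$.

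Next I would invert term by term using $\mathcal{L}^{-1}\{u^{-\alpha(t+k)}\}(x)=x^{\alpha(t+k)-1}/\Gamma(\alpha(t+k))$, which exactly produces the claimed series \eqref{MLLP-density}. The function defined by that series is well-defined and smooth on $(0,\infty)$: using Stirling, $\Gamma(t+k)/k!\sim k^{t-1}$ while $\Gamma(\alpha(t+k))$ grows super-exponentially in $k$, so for every fixed $x>0$ the series converges absolutely (and uniformly on compact subsets of $(0,\infty)$). Denoting the resulting function by $g(x)$, I would then justify that $g$ is the desired density.

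The key step — and the main obstacle — is rigorously interchanging summation and the inverse Laplace transform. Rather than fighting with contour shifting, I would argue by uniqueness: because the $k$-th term of the series for $g$ is, up to sign, a positive function with LT $\tfrac{\lambda^{t+k}\Gamma(t+k)}{\Gamma(t)\,k!}u^{-\alpha(t+k)}$, one can bound $|g(x)|$ by a Mittag-Leffler-type majorant that is integrable with respect to $e^{-ux}\,dx$ for every $u>R$. This justifies Fubini/dominated convergence in computing $\int_{0}^{\infty}e^{-ux}g(x)\,dx$ term by term for $\mathrm{Re}(u)$ large, giving
\begin{align*}
\int_{0}^{\infty}e^{-ux}g(x)\,dx
=\lambda^{t}u^{-\alpha t}\sum_{k=0}^{\infty}(-1)^{k}\frac{\Gamma(t+k)}{\Gamma(t)\,k!}\bigl(\lambda u^{-\alpha}\bigr)^{k}
=\Bigl(\frac{\lambda}{\lambda+u^{\alpha}}\Bigr)^{t}.
\end{align*}

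Finally, since both $g$ and the actual density $f_{M_{\alpha,\lambda}(t)}$ have identical Laplace transforms on a right half-plane, the uniqueness theorem for the Laplace transform of finite measures on $(0,\infty)$ forces $g=f_{M_{\alpha,\lambda}(t)}$ a.e., establishing \eqref{MLLP-density}. I would also remark that the series can be recognized as $\lambda^{t}x^{\alpha t-1}E_{\alpha,\alpha t}^{t}(-\lambda x^{\alpha})$, where $E_{\alpha,\beta}^{\gamma}$ is the Prabhakar (three-parameter Mittag-Leffler) function, which gives a convenient closed-form checkpoint for the computation.
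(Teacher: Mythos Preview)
Your proposal is correct and follows essentially the same route as the paper: factor the Laplace transform as $\lambda^{t}u^{-\alpha t}(1+\lambda u^{-\alpha})^{-t}$, expand by the generalized binomial theorem, and invert term by term via $\mathcal{L}^{-1}(u^{-a})=x^{a-1}/\Gamma(a)$. The paper carries out exactly these formal steps without the additional convergence and Fubini/uniqueness justifications you supply, so your argument is a more careful version of the same proof.
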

\begin{proof}
We have
\begin{align*}
\tilde{f}_{M_{\alpha,\lambda}(t)}(u) &= \mathbb{E}e^{-uM_{\alpha,\lambda}(t)} = \left(\frac{\lambda}{\lambda + u^{\alpha}}\right)^t\\
& = \lambda^tu^{-\alpha t} \sum_{k=0}^{\infty}\left(1+\frac{\lambda}{u^{\alpha}}\right)^{-t} = \sum_{k=0}^{\infty} \lambda^{t+k}{-t\choose k} u^{-\alpha(t+k)}\\
& = \sum_{k=0}^{\infty} (-1)^k \lambda^{t+k} \frac{\Gamma(t+k)}{\Gamma(t)\Gamma(k+1)} u^{-\alpha(t+k)}.
\end{align*}
Now, by inverting the LT in both sides and using $\mathcal{L}^{-1}(1/u^a) = x^{a-1}/\Gamma(a)$, $a>0$, where $\mathcal{L}^{-1}$ denotes the inverse LT,  we have the desired result.
\end{proof}

Using the marginal density function of MLLP, we obtain the L\'evy density corresponding to L\'evy-Khintchine representation of this process.

\begin{proposition}\label{levy-density}
The L\'evy density in L\'evy-Khintchine representation of the process is given by
\begin{align} \label{MLLP-LevyMeasure}
\nu(dx) = \frac{\alpha}{x}E_{\alpha,1}(-\lambda x^{\alpha}), x>0.
\end{align} 
\end{proposition}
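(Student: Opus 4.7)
My plan is to exploit the fact that $M_{\alpha,\lambda}(t)$ is a driftless subordinator and identify $\nu$ by inverting a single Laplace transform, in the same spirit as Proposition \ref{marginal-density}. From the subordinated representation $M_{\alpha,\lambda}(t)=S_{\alpha}(G_{\lambda,1}(t))$, a composition of strictly increasing driftless L\'evy processes, $M_{\alpha,\lambda}$ is itself a subordinator with zero drift; equivalently, the Laplace exponent $\psi(u):=\log(1+u^{\alpha}/\lambda)$ read off from \eqref{LT-MLLP} behaves like $u^{\alpha}/\lambda$ near $u=0$, which rules out any linear drift term. Consequently the L\'evy--Khintchine formula collapses to $\psi(u)=\int_{0}^{\infty}(1-e^{-ux})\,\nu(dx)$.

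First I would differentiate in $u$, the interchange of derivative and integral being justified by dominated convergence together with the L\'evy integrability condition $\int_{0}^{\infty}(1\wedge x)\,\nu(dx)<\infty$, to obtain
\[
\psi'(u)=\int_{0}^{\infty} x e^{-ux}\,\nu(x)\,dx,
\]
so that $\psi'(u)$ is the Laplace transform of the function $x\nu(x)$. A direct computation from the closed form yields
\[
\psi'(u)=\frac{\alpha u^{\alpha-1}}{\lambda+u^{\alpha}}.
\]

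Next I would invoke the classical Laplace transform identity for the Mittag--Leffler function (see, e.g., Gorenflo et al., 2014),
\[
\int_{0}^{\infty} e^{-ux} E_{\alpha,1}(-\lambda x^{\alpha})\,dx=\frac{u^{\alpha-1}}{u^{\alpha}+\lambda},
\]
which shows that $\alpha E_{\alpha,1}(-\lambda x^{\alpha})$ has Laplace transform exactly $\psi'(u)$. Uniqueness of the Laplace transform then forces $x\nu(x)=\alpha E_{\alpha,1}(-\lambda x^{\alpha})$, giving \eqref{MLLP-LevyMeasure}.

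The main obstacle is establishing the Mittag--Leffler Laplace identity. Deriving it from scratch means inserting the series \eqref{ML-Series} for $E_{\alpha,1}(-\lambda x^{\alpha})$ into the integral, applying $\mathcal{L}\{x^{\alpha k}\}(u)=\Gamma(\alpha k+1)/u^{\alpha k+1}$ term by term, and summing the resulting geometric series in $\lambda/u^{\alpha}$; the sum/integral interchange is legitimate on the half-plane $u^{\alpha}>\lambda$, and the identity then extends by analytic continuation. Once this classical fact is accepted, the remaining argument is a one-line Laplace inversion entirely analogous to the one used to obtain \eqref{MLLP-density}.
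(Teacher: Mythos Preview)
Your argument is correct and follows a genuinely different route from the paper's. The paper proves this proposition by invoking the general identity $\nu(x)=\lim_{t\downarrow 0}t^{-1}f(x,t)$ for subordinators (Barndorff-Nielsen, 2000), inserting the series \eqref{MLLP-density} for the marginal density obtained in Proposition~\ref{marginal-density}, passing the limit $t\downarrow 0$ through the sum term by term, and then recognising the resulting power series as $\frac{\alpha}{x}E_{\alpha,1}(-\lambda x^{\alpha})$ via \eqref{ML-Series}. You instead work directly with the Laplace exponent: differentiate $\psi(u)=\log(1+u^{\alpha}/\lambda)$, observe that $\psi'(u)$ is the Laplace transform of $x\,\nu(x)$, and match it against the classical Mittag--Leffler Laplace pair. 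Your method is more self-contained in that it does not rely on Proposition~\ref{marginal-density} and avoids the interchange of limit and infinite series implicit in the paper's computation; the paper's method, on the other hand, flows naturally from the density formula just established and does not require quoting or rederiving the Mittag--Leffler transform identity.

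One small correction: the asymptotic $\psi(u)\sim u^{\alpha}/\lambda$ as $u\to 0$ does \emph{not} by itself rule out a drift, since for $\alpha<1$ any linear term $bu$ is dominated by $u^{\alpha}$ near the origin. The drift of a subordinator is detected at infinity via $b=\lim_{u\to\infty}\psi(u)/u$, which here is indeed zero because $\psi(u)$ grows only logarithmically. Your subordination argument (a composition of driftless subordinators is driftless) already settles the point; the ``equivalently'' clause should be dropped or replaced by the $u\to\infty$ limit.
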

\begin{proof}
For positive L\'evy process (i.e. subordinator) with probability density $f(x, t)$, L\'evy measure density $\nu(dx)$ is given by (see e.g. Barndorff-Nielsen, 2010)
\begin{equation*}
\nu(dx) = \lim_{t\downarrow 0} \frac{1}{t}f(x,t).
\end{equation*}
Thus, using \eqref{MLLP-density}, we obtain
\begin{align*}
\nu(dx) &= \sum_{k=0}^{\infty}(-1)^k\lambda^k \frac{x^{\alpha k-1}}{\Gamma(\alpha k)} \lim_{t\downarrow 0} \frac{\Gamma(t+k)}{t\Gamma(t)\Gamma(k+1)}\\
&= \sum_{k=0}^{\infty}(-1)^k\lambda^k \frac{x^{\alpha k-1}}{k\Gamma(\alpha k)}\\
& = \frac{\alpha}{x} \sum_{k=0}^{\infty}(-1)^k \frac{(\lambda x^{\alpha})^{k}}{\Gamma(\alpha k+1)} = \frac{\alpha}{x}E_{\alpha,1}(-\lambda x^{\alpha})\; (\mbox{using}\;\eqref{ML-Series}).  
\end{align*}
\end{proof}
\begin{remark}
For $\alpha = 1,$ \eqref{MLLP-LevyMeasure} reduces to $\nu(dx) = x^{-1}e^{-\lambda x}$ and which is the L\'evy density corresponding to the gamma L\'evy procees $G_{\lambda,1}(t)$ (see e.g. Applebaum, 2009, p. 55).
\end{remark}

\begin{remark}
Since, $E_{\alpha,1}(-\lambda x^{\alpha})\rightarrow 1$ as $x\rightarrow 0$, we have $\nu(dx)\sim 1/x$ as $x\rightarrow 0$, where $f(x)\sim g(x)$ as $x\rightarrow x_0$ means that $\lim_{x\rightarrow x_0}f(x)/g(x)=1.$ Hence $\int_{0}^{\infty} \nu(x) dx = \infty.$ Thus sample paths of MLLP  are strictly increasing by an application of Theorem 21.3 of Sato (1999). Similar to gamma L\'evy process the L\'evy measure for MLLP is concentrated at origin and hence this process has an infinite arrival rate of jumps, most of which are small.
\end{remark}

For asymptotic behavior of density function, we use Tauberian theorem. First, we recall that a function $L(t)$ is {\it slowly varying} at some $t_0$, if for all fixed $c>0$, $\lim_{t\rightarrow t_0}\frac{L(ct)}{L(t)}=1$. For the sake of convenience, we state Tauberian theorem here (see e.g. Bertoin, 1996, p. 10).
\begin{theorem}[Tauberian Theorem]\label{Taubarian} Let $L:(0,\infty)\rightarrow(0,\infty)$ be a slowly varying function at $0$ (respectively $\infty$) and let $q\geq 0.$ Then, for a function $U: (0,\infty)\rightarrow(0,\infty)$, with corresponding LT $\tilde{U}$ the following are equivalent: 
$$(i)~~~~~ U(t)\sim t^{q}L(t)/\Gamma(1+ q),~~  t\rightarrow 0 ~(\mbox{respectively}~ t\rightarrow\infty).$$
$$\hspace{-.2cm}(ii)~~~~~~ \tilde{U}(u)\sim u^{-q-1}L(1/q), ~~ q\rightarrow \infty~ (\mbox{respectively}~ q\rightarrow 0).$$
\end{theorem}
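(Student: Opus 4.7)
The plan is to prove both directions of the equivalence separately, treating the case $t\to\infty$, $u\to 0$ in detail; the case $t\to 0$, $u\to\infty$ follows by the symmetric change of variables $t\mapsto 1/t$, $u\mapsto 1/u$. For notational stability I would interpret $\tilde U(u)=\int_{0}^{\infty}e^{-ut}\,dU(t)$ (or, after integration by parts, $u\int_{0}^{\infty}e^{-ut}U(t)\,dt$), so that the model example $U(t)=t^{q}$ gives $\tilde U(u)=\Gamma(1+q)u^{-q-1}$, matching the claimed equivalence (with $L(1/u)$ where the statement writes $L(1/q)$).

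First I would establish the easier Abelian direction $(i)\Rightarrow(ii)$. Assuming $U(t)\sim t^{q}L(t)/\Gamma(1+q)$ as $t\to\infty$, I would substitute into the Laplace-Stieltjes integral and perform the change of variables $t=s/u$, writing
\begin{equation*}
\tilde U(u)\;=\;u^{-q}\int_{0}^{\infty}e^{-s}\,\frac{U(s/u)}{(s/u)^{q}}\,s^{q}\,\frac{ds}{s}\cdot(\cdots),
\end{equation*}
and then invoking the uniform convergence theorem for slowly varying functions, which guarantees $L(s/u)/L(1/u)\to 1$ uniformly on compact $s$-sets as $u\to 0$. A dominated convergence argument, splitting the integral into a near-zero piece, a compact middle piece, and a tail (where Potter's bounds control $L$), then yields $\tilde U(u)\sim u^{-q-1}L(1/u)$.

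The harder Tauberian direction $(ii)\Rightarrow(i)$ needs Karamata's classical argument. The steps I would carry out are: (a) rescale $U$ to form finite measures $\mu_{u}$ on $[0,\infty)$ by $\mu_{u}(A)=U(A/u)/(u^{-q-1}L(1/u))$; (b) use the hypothesis on $\tilde U$ together with another change of variables to show that the Laplace transforms of $\mu_{u}$ converge pointwise to the Laplace transform of the reference measure $\gamma(dt)=t^{q-1}\,dt/\Gamma(q)$ (or the $q$-analogue when $q=0$); (c) apply the continuity theorem for Laplace transforms to obtain vague convergence $\mu_{u}\to\gamma$; (d) upgrade vague convergence to convergence of $U(t)/(t^{q}L(t)/\Gamma(1+q))\to 1$ by the Karamata polynomial trick, approximating $\mathbf 1_{[0,1]}$ uniformly from above and below by finite linear combinations of functions of the form $x\mapsto x^{k}e^{-kx}$, whose integrals against $\mu_{u}$ are known from the Laplace data. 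Monotonicity of $U$ is what allows the one-sided sandwich inequalities to survive the approximation.

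The main obstacle is step (d): controlling the error in the polynomial approximation so that it does not interact badly with the slowly varying factor $L(1/u)$. The uniform convergence theorem for slowly varying functions, together with Potter's bounds to tame behavior at $0$ and $\infty$, is the technical lever that keeps the estimates tight; without some regularity condition such as monotonicity on $U$, the implication $(ii)\Rightarrow(i)$ simply fails, so a careful statement of hypotheses (as in Bertoin 1996 or Feller 1971, Chapter XIII) has to be assumed tacitly. Once these ingredients are in place, the estimates close and the equivalence follows.
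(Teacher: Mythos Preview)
The paper does not prove this theorem at all: it is merely quoted from Bertoin (1996, p.~10) as background, with no argument supplied. So there is nothing in the paper to compare your proposal against.

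For what it is worth, your sketch is the standard Karamata--Feller route (Abelian direction by rescaling plus uniform convergence for slowly varying functions and Potter bounds; Tauberian direction via the continuity theorem for Laplace transforms and the polynomial-approximation sandwich), and it is essentially correct as an outline. You also correctly flag that the statement as printed has typos: in (ii) the slowly varying factor should be $L(1/u)$, not $L(1/q)$, and the limit should be in $u$, not in $q$. One further caveat you mention only in passing deserves emphasis: the Tauberian direction genuinely requires a monotonicity (or equivalent regularity) hypothesis on $U$, which the paper's statement omits; without it $(ii)\Rightarrow(i)$ is false.
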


\begin{proposition}
The density function of MLLP has following asymptotic behaviors
\begin{align*}
 f_{M_{\alpha, \lambda}(t)}(x) \sim \left\{
\begin{array}{ll}
\displaystyle  \frac{\lambda^t}{\Gamma(\alpha t)} x^{\alpha t-1},~~~~~~~~~~~~~~~\mbox{as}~ x\rightarrow 0,\\
\frac{1}{\Gamma(-\alpha t)} (\lambda + x^{\alpha})^t x^{-\alpha t-1}, ~~~~~~~~~\mbox{as}~x\rightarrow\infty.
\end{array}\right.
\end{align*}
\end{proposition}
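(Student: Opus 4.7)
My plan is to apply the Tauberian theorem (Theorem~\ref{Taubarian}) in both regimes, since by \eqref{LT-MLLP} the Laplace transform $\tilde f(u)=\lambda^{t}(\lambda+u^{\alpha})^{-t}$ encodes the small-$x$ behavior of $f$ through its large-$u$ asymptotics, and conversely the large-$x$ behavior through the small-$u$ asymptotics. The two cases split naturally.

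For the $x\to 0^{+}$ case, the idea is to pull out the dominant power of $u$ at infinity. Writing $\lambda+u^{\alpha}=u^{\alpha}(1+\lambda u^{-\alpha})$ gives $\tilde f(u)=\lambda^{t}u^{-\alpha t}(1+\lambda u^{-\alpha})^{-t}\sim \lambda^{t}u^{-\alpha t}$ as $u\to\infty$. This matches part (ii) of Theorem~\ref{Taubarian} with $q=\alpha t-1$ and the (trivially slowly-varying) constant $L\equiv\lambda^{t}$, so part (i) yields $f(x)\sim \lambda^{t}x^{\alpha t-1}/\Gamma(\alpha t)$ as $x\to 0^{+}$. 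As a sanity check, this is precisely the $k=0$ summand of the convergent series in Proposition~\ref{marginal-density}, and the remaining terms carry strictly larger powers of $x$, so they are negligible at the origin.

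For the $x\to\infty$ case the situation is subtler, because $\tilde f(0)=1\neq 0$ and Tauberian cannot be applied to $\tilde f$ itself. The plan is to detour through the survival function $\overline F(x):=\mathbb{P}(M_{\alpha,\lambda}(t)>x)$, whose Laplace transform (via Fubini) is $\widetilde{\overline F}(u)=(1-\tilde f(u))/u$. Expanding $\tilde f$ near $u=0$ gives $\tilde f(u)=1-(t/\lambda)u^{\alpha}+O(u^{2\alpha})$, hence $\widetilde{\overline F}(u)\sim (t/\lambda)u^{\alpha-1}$ as $u\to 0^{+}$. Applying Theorem~\ref{Taubarian} with $q=-\alpha$ and $L\equiv t/\lambda$ then yields $\overline F(x)\sim (t/(\lambda\Gamma(1-\alpha)))\,x^{-\alpha}$, a regularly varying tail of index $-\alpha$. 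Since $\nu$ is regularly varying at infinity by Proposition~\ref{levy-density} and, for $\alpha<1$, $M_{\alpha,\lambda}(t)$ is a subordinator whose density has eventually monotone tail, the monotone density theorem upgrades this to the corresponding pointwise density asymptotic at infinity; matching constant prefactors to the stated closed form is then a calculation using the reflection identity $\Gamma(-\alpha t)=-\Gamma(1-\alpha t)/(\alpha t)$ and extracting the leading contribution from the $(\lambda+x^{\alpha})^{t}$ factor at large $x$.

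The main obstacle will be the $x\to\infty$ estimate. The survival-function detour is essential because the LT fails to vanish at the origin, and promoting a tail bound on $\overline F$ to a pointwise density bound on $f$ requires invoking the monotone density theorem, whose regular-variation and monotonicity hypotheses have to be verified. Reconciling the prefactor obtained from Tauberian with the closed-form expression $\frac{1}{\Gamma(-\alpha t)}(\lambda+x^{\alpha})^{t}x^{-\alpha t-1}$ stated in the proposition is the last delicate step and is where most care with the gamma-function identities will be needed.
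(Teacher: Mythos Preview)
For $x\to 0^{+}$ your argument coincides with the paper's: factor out $u^{-\alpha t}$, note that the remaining factor tends to~$1$, and invoke Theorem~\ref{Taubarian}.

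For $x\to\infty$ the paper takes a different route. It does \emph{not} pass to the survival function; instead it rewrites
\[
\tilde f(u)=\Bigl(1+\tfrac{u^{-\alpha}}{\lambda}\Bigr)^{t}\lambda^{t}u^{\alpha t}\Bigl[1+\bigl(\lambda+\tfrac{1}{\lambda}\bigr)u^{\alpha}+u^{2\alpha}\Bigr]^{-t}\sim L(1/u)\,u^{\alpha t}\qquad(u\to 0),
\]
with $L(x)=(\lambda+x^{\alpha})^{t}$, and then applies Theorem~\ref{Taubarian} directly with exponent $q=-\alpha t-1$, reading off the stated closed form in one stroke.

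Your survival-function detour is the more orthodox way to extract large-$x$ behaviour, but the final ``matching'' step you outline cannot be completed. From $\widetilde{\overline F}(u)\sim(t/\lambda)u^{\alpha-1}$ you obtain $\overline F(x)\sim\dfrac{t}{\lambda\Gamma(1-\alpha)}x^{-\alpha}$ and then, via the monotone density theorem,
\[
f(x)\sim\frac{\alpha t}{\lambda\,\Gamma(1-\alpha)}\,x^{-\alpha-1}\qquad(x\to\infty),
\]
a power law of index $-\alpha-1$. The closed form in the proposition, however, satisfies
\[
\frac{(\lambda+x^{\alpha})^{t}}{\Gamma(-\alpha t)}\,x^{-\alpha t-1}\sim\frac{1}{\Gamma(-\alpha t)}\,x^{-1}\qquad(x\to\infty),
\]
a power law of index $-1$. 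No gamma-function identity will convert $x^{-\alpha-1}$ into $x^{-1}$, so the reconciliation you promise is impossible: your computation yields a different (and, incidentally, integrable) tail than the one stated. The paper obtains its formula only by applying Theorem~\ref{Taubarian} with $q=-\alpha t-1<0$, outside the range $q\ge 0$ in which the theorem is stated; your route, being a genuine Tauberian argument, simply does not reproduce that expression.
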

\begin{proof}
We have
\begin{align*}
\tilde{f}_{M_{\alpha, \lambda}(t)}(u) &= \mathbb{E} e^{u M_{ \alpha, \lambda}(t)} = \left(\frac{\lambda}{\lambda + u ^{\alpha}}\right)^t\\
&= \lambda^t u^{-\alpha t} \left(1+ \frac{\lambda}{u^{\alpha}}\right)^{-t} \sim \lambda^t u^{-\alpha t}\;\;\mbox{as}\;u\rightarrow\infty.
\end{align*}
Now, by the application of Tauberian theorem, we have $f_{M_{\alpha, \lambda}(t)}(x) \sim \frac{\lambda^t}{\Gamma(\alpha t)} x^{\alpha t-1}$, as $x\rightarrow 0$. Further, we have 
\begin{align*}
\tilde{f}_{M_{\alpha, \lambda}(t)}(u) &= \left(1+ \frac{u^{\alpha}}{\lambda}\right)^{-t}
= \left(1+ \frac{u^{-\alpha}}{\lambda}\right)^{t}\left[ \left(1+ \frac{u^{-\alpha}}{\lambda}\right)\left(1+ \frac{u^{\alpha}}{\lambda}\right)\right]^{-t}\\
& = \left(1+ \frac{u^{-\alpha}}{\lambda}\right)^{t} \left[1+\frac{1}{\lambda^2} + \frac{u^{\alpha}}{\lambda} + \frac{u^{-\alpha}}{\lambda}\right]^{-t}\\
& = \left(1+ \frac{u^{-\alpha}}{\lambda}\right)^{t} \lambda^t u^{\alpha t}\left[ 1+ \left(\lambda+\frac{1}{\lambda}\right)u^{\alpha} + u^{2\alpha}\right]^{-t}\\
&\sim  \left(1+ \frac{u^{-\alpha}}{\lambda}\right)^{t} \lambda^t u^{\alpha t} \;\; \mbox{as} \; u\rightarrow 0\\
&\equiv L(1/u)u^{-(-\alpha t-1)-1}
\end{align*}
Note that the function $L(u) = \left(\lambda+ u^{\alpha}\right)^{t}$ is slowly varying at $0$. Hence again by the application of Tauberian Theorem, we have $f_{M_{\alpha, \lambda}(t)}(x) \sim \frac{1}{\Gamma(-\alpha t)} (\lambda + x^{\alpha})^t x^{-\alpha t-1}$, as $x\rightarrow \infty.$   
\end{proof}

\setcounter{equation}{0}
\section{MLLP fractional order moments}

Using \eqref{MLLP-Sub}, we have $\mathbb{E} (M_{\alpha, \lambda}(t)) =\mathbb{E}\left(\mathbb{E}(S_{\alpha}(G_{\lambda, \beta}(t))|G_{\lambda, \beta}(t))\right) =\infty$, since mean is infinite for $\alpha$-stable distribution.  Thus, we need to find out the fractional moments of the MLLP. Here we use LT definition of MLLP to obtain the fractional order moments.
For a positive random variable $X$, the fractional order moments from LT $\phi(u)$ can be obtained as follows.
\begin{align*}
\int_{0}^{\infty} \frac{d}{du}[\phi(u)] u^{p-1} du &= \int_{0}^{\infty} \frac{d}{du}[\mathbb{E}e^{-uX}] u^{p-1} ds \\
& = \mathbb{E}\left[\int_{0}^{\infty} \frac{d}{du}[e^{-uX}] u^{p-1} du\right]\;\; (\mbox{By Fubini Theorem)} \\
& = (-1) \mathbb{E}\left[X\int_{0}^{\infty} e^{-uX} u^{p-1} du\right] = (-1) \Gamma(p)\mathbb{E}X^{1-p}.
\end{align*}
Thus $(1-p)$-th order moment, where $p$ is a real number such that $p\in(0,1)$ is given by
\begin{equation} \label{final-moment}
\mathbb{E}(X^{1-p}) = \frac{(-1)}{ \Gamma(p)} \int_{0}^{\infty} \frac{d}{du}[\phi(u)] u^{p-1} du.
\end{equation}

\begin{corollary}
The fractional order moments of MLLP are given by
\begin{equation}\label{moments-mllp}
\mathbb{E}(M_{\alpha, \lambda}(t)^q) = \frac{t} {\lambda^q \Gamma(1-q)} B\left(1-\frac{q}{\alpha}, t+\frac{q}{\alpha}\right),\;\; 0<q< \alpha.
\end{equation}
\end{corollary}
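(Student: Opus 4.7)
The plan is to apply the general fractional-moment identity \eqref{final-moment} with $\phi(u) = \left(\frac{\lambda}{\lambda+u^{\alpha}}\right)^{t}$ and $p=1-q$, reducing the computation of $\mathbb{E}[M_{\alpha,\lambda}(t)^{q}]$ to a single one-dimensional integral that I will recognize as a Beta function. The range $0<q<\alpha$ will emerge naturally as the condition under which the Beta integral converges at both endpoints.

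First I would compute the derivative of the Laplace transform explicitly: from $\phi(u)=\lambda^{t}(\lambda+u^{\alpha})^{-t}$ a direct differentiation gives
\begin{equation*}
\phi'(u) \;=\; -\,\alpha t\,\lambda^{t}\,u^{\alpha-1}\,(\lambda+u^{\alpha})^{-t-1}.
\end{equation*}
Substituting this into \eqref{final-moment} with $p=1-q$ yields
\begin{equation*}
\mathbb{E}[M_{\alpha,\lambda}(t)^{q}] \;=\; \frac{\alpha t\,\lambda^{t}}{\Gamma(1-q)}\int_{0}^{\infty} u^{\alpha-1-q}\,(\lambda+u^{\alpha})^{-t-1}\,du.
\end{equation*}

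Next I would perform the change of variables $v=u^{\alpha}/\lambda$ (equivalently, first $v=u^{\alpha}$ and then rescale by $\lambda$), which converts the integral into a standard Beta-type form. The Jacobian produces a factor $\frac{1}{\alpha}\lambda^{-t-q/\alpha}$ in front, while the integrand becomes $v^{-q/\alpha}(1+v)^{-t-1}$. I would then invoke the classical identity
\begin{equation*}
\int_{0}^{\infty} v^{a-1}(1+v)^{-a-b}\,dv \;=\; B(a,b), \qquad a,b>0,
\end{equation*}
with $a=1-q/\alpha$ and $b=t+q/\alpha$. The convergence constraints $a>0$ and $b>0$ are exactly $0<q<\alpha$ (together with the obvious $t>0$), matching the stated range. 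Collecting the prefactors $\alpha t\lambda^{t}\cdot \tfrac{1}{\alpha}\lambda^{-t-q/\alpha}/\Gamma(1-q)$ would then produce the claimed closed form.

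The only nontrivial step is the bookkeeping of the Jacobian and the verification that both endpoints of the Beta integral are controlled: near $v=0$ integrability requires $1-q/\alpha>0$, and near $v=\infty$ the decay $(1+v)^{-t-1-(-q/\alpha)}$ requires $t+q/\alpha>0$. Once these are in place, everything is a routine rearrangement; interchanging differentiation and expectation in the derivation of \eqref{final-moment} is already justified in the text via Fubini. I do not anticipate any deeper obstacle beyond keeping careful track of the exponents of $\lambda$ and the constants produced by the substitution.
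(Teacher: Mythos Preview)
Your argument is exactly the paper's: set $p=1-q$ in \eqref{final-moment}, differentiate $\phi(u)=\lambda^{t}(\lambda+u^{\alpha})^{-t}$, substitute $u^{\alpha}=\lambda v$, and recognize the Beta integral $B(1-q/\alpha,\,t+q/\alpha)$. One small caveat: your collected prefactor $t\lambda^{-q/\alpha}/\Gamma(1-q)$ (which agrees with the paper's own derivation and with the alternative computation in Remark~4.1) does not literally match the $\lambda^{q}$ printed in the displayed statement---that exponent is a typo in the corollary, and the correct power is $\lambda^{q/\alpha}$.
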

\begin{proof}
By taking $p=1-q$ in \eqref{final-moment} and using \eqref{LT-MLLP}, we have
\begin{align*}
\mathbb{E}(M_{\alpha, \lambda}(t)^q) &= \frac{-1}{\Gamma(1-q)} \int_{0}^{\infty} \frac{d}{du}\left[\frac{\lambda}{\lambda + u^{\alpha}}\right]^{t} u^{-q} du\\
& = \frac{\alpha t \lambda^t}{\Gamma(1-q)} \int_{0}^{\infty} \frac{u^{\alpha-q-1}}{(\lambda+u^{\alpha})^{t+1}}du\\
& = \frac{ t \lambda^{-\frac{q}{\alpha}}}{\Gamma(1-q)} \int_{0}^{\infty} \frac{y^{-\frac{q}{\alpha}}}{(1+y)^{t+1}}dy\;\;(\mbox{by substituting}\;u^{\alpha} = \lambda y)\\
& = \frac{t} {\lambda^\frac{q}{\alpha} \Gamma(1-q)} \int_{0}^{\infty} \frac{y^{(1-\frac{q}{\alpha})-1}}{(1+y)^{(1-\frac{q}{\alpha})+(t+\frac{q}{\alpha})}}dy\\
& = \frac{t} {\lambda^\frac{q}{\alpha} \Gamma(1-q)} B\left(1-\frac{q}{\alpha}, t+\frac{q}{\alpha}\right).
\end{align*}
\end{proof}
\begin{remark} Alternatively representation \eqref{ALR} of MLLP can also be used in finding the fractional order moments.
Note that
\begin{align*}
\mathbb{E}\left(M_{\alpha,\lambda}(t)\right)^q & = \mathbb{E}[{G_{\lambda,1}(t)}]^{q/\alpha}\mathbb{E}[S_{\alpha}(1)]^q\\
& = \frac{\Gamma{(\frac{q}{\alpha}+t})}{\lambda^\frac{q}{\alpha} \Gamma{(t)}} \frac{\Gamma{(1-\frac{q}{\alpha})}}{\Gamma{(1-q)}}\\
& =  \frac{t} {\lambda^\frac{q}{\alpha} \Gamma(1-q)} B\left(1-\frac{q}{\alpha}, t+\frac{q}{\alpha}\right) 
\end{align*}
\end{remark}
\begin{remark}
for $\alpha=1$ and $t=1$ in \eqref{moments-mllp}, we have $\mathbb{E}M_{1, \lambda}(1)^q = \frac{1} {\lambda^q \Gamma(1-q)}B(1-q,1+q) = \frac{\Gamma(1+q)}{\lambda^q}$, which is the $q$-th order moment of exponential distribution with parameter $\lambda.$
\end{remark}

\setcounter{equation}{0}
\section{Stochastic self-similarity}
We use the definition of stochastic self-similarity introduced by Kozubowski et al. (2006). For completeness purpose we recall their definition.
\begin{definition}[Stochastic self-similarity]
Let X(t) be a stochastic process. Let $T_{c}(t),c \in(1,\infty)$ be a family of stochastic processes independent of $X(t)$ such that $T_c(0) = 0$ a.s. with non-decreasing sample paths and $\mathbb{E}T_c(t) = ct$. The process $X(t)$ is stochastically self-similar with index $\eta > 0$  with respect to the family $T_c(t)$ if
\begin{equation}
\{X(T_c(t)), t\geq 0\} \stackrel{d}= \{c^{\eta}X(t), t\geq 0\},\;c\in(1,\infty),
\end{equation}
where $\stackrel{d}=$ denotes the equivalence of finite dimensional distributions.
\end{definition}

\noindent Let $N_p(t)$ be a negative binomial L\'evy process with drift defined as $N_{p}(t):= t+ NB_p(t)$, where $NB_p(t)$ is a L\'evy process such that 
\begin{equation}\label{NB-marginal}
\mathbb{P}(NB_p(t) = j) = {t+j-1 \choose j}p^t (1-p)^j,\; j=0,1\cdots,\; t>0.
\end{equation}
We have the following result for the stochastic self-similarity of the MLLP.
\begin{proposition}
MLLP is stochastic self-similar with index $1/\alpha$ with respect to the process $N_{1/c}(t)$. That is 
\begin{equation}\label{stochastic-selfsimilarity}
\{M_{\alpha,\lambda}(N_{1/c}(t)), t\geq 0\} \stackrel{d}= \{c^{1/\alpha} M_{\alpha, \lambda}(t), t\geq 0\}.
\end{equation}
\end{proposition}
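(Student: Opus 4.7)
The plan is to reduce the claimed identity in finite-dimensional distributions to the equality of one-dimensional Laplace transforms, then compute both sides explicitly.

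First I would observe that both processes in \eqref{stochastic-selfsimilarity} are L\'evy processes: the right-hand side is a deterministic rescaling of $M_{\alpha,\lambda}$, and the left-hand side is a L\'evy process subordinated by the non-decreasing L\'evy process $N_{1/c}(t)=t+NB_{1/c}(t)$, which is itself L\'evy. Since two L\'evy processes with the same one-dimensional marginal laws automatically have the same finite-dimensional distributions, it suffices to verify, for every $t>0$ and $u\geq 0$,
\[
\E e^{-u M_{\alpha,\lambda}(N_{1/c}(t))} = \E e^{-u c^{1/\alpha} M_{\alpha,\lambda}(t)}.
\]

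Next I would compute the right-hand side directly from \eqref{LT-MLLP}:
\[
\E e^{-u c^{1/\alpha} M_{\alpha,\lambda}(t)} = \left(\frac{\lambda}{\lambda+(c^{1/\alpha}u)^\alpha}\right)^t = \left(\frac{\lambda}{\lambda+c u^\alpha}\right)^t.
\]
For the left-hand side I would condition on $N_{1/c}(t)$ and use \eqref{LT-MLLP} to get
\[
\E e^{-u M_{\alpha,\lambda}(N_{1/c}(t))} = \E\!\left[\left(\frac{\lambda}{\lambda+u^\alpha}\right)^{N_{1/c}(t)}\right] = \psi^t\,\E\bigl[\psi^{NB_{1/c}(t)}\bigr],
\]
where $\psi:=\lambda/(\lambda+u^\alpha)\in(0,1]$, since $N_{1/c}(t)=t+NB_{1/c}(t)$.

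The key computation is the probability generating function of $NB_p(t)$ from \eqref{NB-marginal}; summing the negative binomial series gives
\[
\E z^{NB_p(t)} = p^t\sum_{j=0}^{\infty}\binom{t+j-1}{j}\bigl((1-p)z\bigr)^j = \left(\frac{p}{1-(1-p)z}\right)^t,\qquad |z|\le 1.
\]
Applying this with $p=1/c$ and $z=\psi$, and then simplifying the resulting rational expression, yields
\[
\psi^t\cdot\left(\frac{1/c}{1-(1-1/c)\psi}\right)^t = \left(\frac{\lambda}{c\lambda+cu^\alpha-(c-1)\lambda}\right)^t = \left(\frac{\lambda}{\lambda+c u^\alpha}\right)^t,
\]
which matches the Laplace transform computed above for $c^{1/\alpha}M_{\alpha,\lambda}(t)$, completing the proof. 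The only mildly delicate step is the last algebraic simplification and the justification that equal one-dimensional marginals of two L\'evy processes force agreement of all finite-dimensional distributions; everything else is bookkeeping with conditional expectations.
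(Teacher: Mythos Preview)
Your proof is correct and follows essentially the same strategy as the paper: reduce to one-dimensional marginals via the L\'evy property, then compute the Laplace transform of the subordinated process by conditioning on the time change and summing the negative binomial series. The only cosmetic differences are that the paper checks the identity at $t=1$ (using the geometric case of \eqref{NB-marginal}) and conditions in two stages via the representation $M_{\alpha,\lambda}=S_\alpha(G_{\lambda,1}(\cdot))$, whereas you work at general $t$ with the full negative binomial PGF and condition once using \eqref{LT-MLLP} directly.
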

\begin{proof}
Since $M_{\alpha,\lambda}(N_{1/c}(t))$ and $c^{1/\alpha} M_{\alpha, \lambda}(t)$ are both L\'evy processes to prove \eqref{stochastic-selfsimilarity}, it is sufficient to show the equivalence of marginal distributions at $t=1.$ Thus, we have
\begin{align*}
\mathbb{E} \left(e^{-u M_{\alpha,\lambda}(N_{1/c}(1))}\right) &= \mathbb{E} \left(e^{-u S_{\alpha}(G_{\lambda,1}(N_{1/c}(1)))}\right)\\
& = \mathbb{E} \left(\mathbb{E} \left(e^{-u S_{\alpha}(G_{\lambda,1}(N_{1/c}(1)))}|G_{\lambda,1}(N_{1/c}(1))\right)\right)\\
& = \mathbb{E} \left( e^{-u ^{\alpha}G_{\lambda,1}(N_{1/c}(1))}\right)\\
& = \mathbb{E} \left(\mathbb{E} \left(e^{-u^{\alpha} G_{\lambda,1}(N_{1/c}(1))}|N_{1/c}(1)\right)\right)\\
& = \mathbb{E} \left( \frac{\lambda}{\lambda + u^{\alpha}}\right)^{N_{1/c}(1)}= \mathbb{E} \left( \frac{\lambda}{\lambda + u^{\alpha}}\right)^{1+ NB_{1/c}(1)}\\
& = \sum_{j=0}^{\infty} \left(\frac{\lambda}{\lambda + u^{\alpha}}\right)^{1+j} \frac{1}{c}\left(1- \frac{1}{c}\right)^j\;\;(\mbox{using}\;\eqref{NB-marginal})\\
& = \frac{\lambda}{\lambda + cu^{\alpha}},
\end{align*}
which is equal to $\mathbb{E}(e^{-c^{1/\alpha}u M_{\alpha,\lambda}(1)})$ and hence the result.
\end{proof} 

\setcounter{equation}{0}
\section{Simulation}

The sample paths of MLLP can be simulated by subordinating a discretized gamma L\'evy process with stable process. We first simulate a discretized gamma process on equally spaced intervals. Then we simulate the increments of MLLP process $Y_k = M_{\alpha, \lambda}(t_k) -
M_{\alpha, \lambda}(t_{k-1})$ which conditionally on the values of $G_{\lambda,1}(t_k)$ and $G_{\lambda,1}(t_{k-1})$ is a stationary stable process. Note that 
\begin{align*}
Y_k = M_{\alpha, \lambda}(t_k) - M_{\alpha, \lambda}(t_{k-1})& \stackrel{d} = S_{\alpha}(G_{\lambda,1}(t_k)) - S_{\alpha}(G_{\lambda,1}(t_{k-1})) \\
& \stackrel{d} = S_{\alpha}(G_{\lambda,1}(t_k) - G_{\lambda,1}(t_{k-1}))\\
& \stackrel{d}=  S_{\alpha}(G_{\lambda,1}(t_k - t_{k-1})) \stackrel{d} = (G_{\lambda,1}(t_k - t_{k-1}))^{1/\alpha} S_{\alpha}(1).
\end{align*}
Using the above ideas, we have following algorithm to simulate the sample paths of MLLP. We use Python 3.5.1, matplotlib 1.5.1 and numpy 1.10.4 for sample paths simulation.
\begin{enumerate}[(i)]
\item Choose a finite interval $[0, t]$. Suppose our aim is to have $n$-simulated values at
$t_1 = t/n, \cdots , t_{n-1} = (n - 1)t/n,$ and $t_n = t.$
\item Generate a vector $G$ of size $n$ of gamma variates such that $G = (G_1, G_2, \cdots, G_n)$, with $G_i \sim$ gamma$(\lambda, t_i- t_{i-1})$.
\item Now generate an independent vector of size $n$ of standard $\alpha$-stable random numbers $S = (S_1, S_2, \cdots, S_n)$.
\item Compute $Y_k = G_k ^{1/\alpha}S_k,\;k\geq 1$ and $M_i = \sum_{k=1}^{i} Y_k.$
Then $M_1, M_2, \cdots , M_n$ denote $n$-simulated values from MLLP.
\end{enumerate}

\begin{figure}[ht!]
\centering
 \subfloat[$\alpha = 0.9$ and $\lambda= 1.0$\label{fig:a}]{\includegraphics[width=.5\textwidth, height = .5\textwidth]{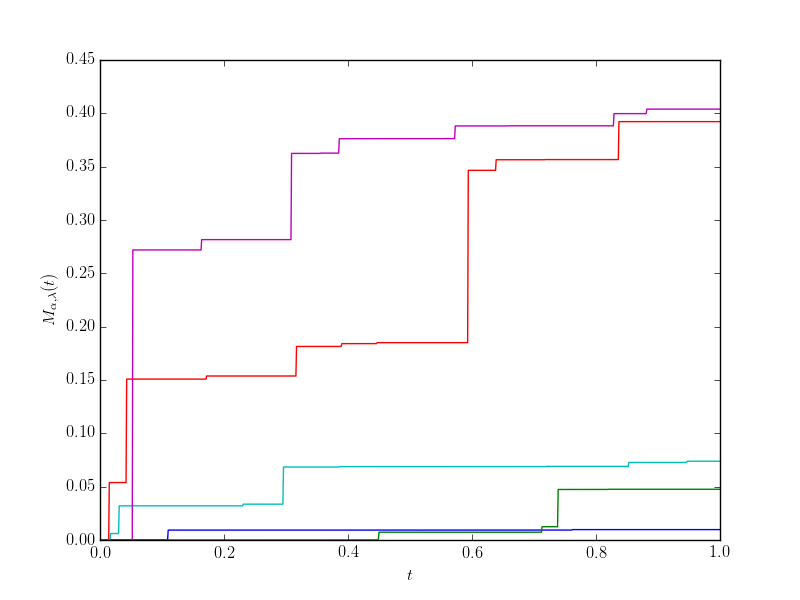}}
 \subfloat[$\alpha = 0.4$ and $\lambda= 1.0$ \label{fig:b}]{\includegraphics[width=.5\textwidth, height = .5\textwidth]{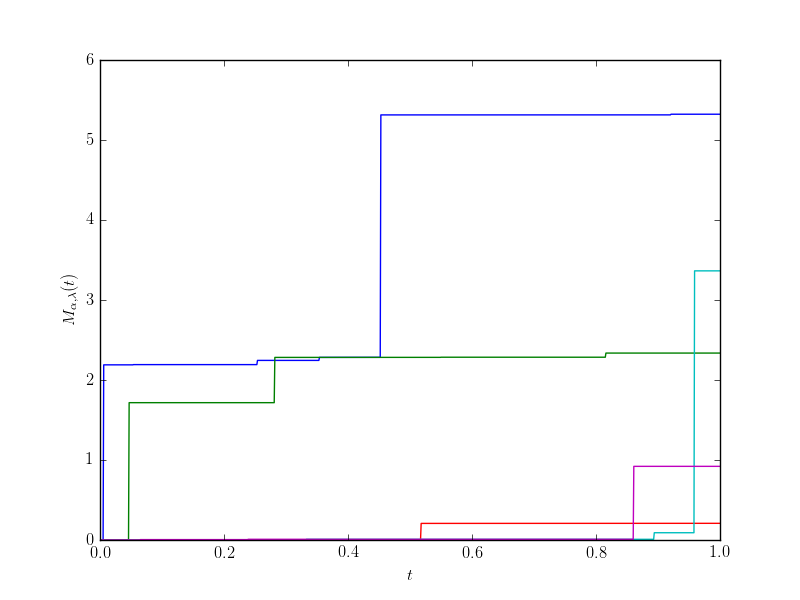}}   
\caption{Sample Paths of MLLP}
\end{figure}

\setcounter{equation}{0}
\section{Generalization}
As the integer order moments of MLLP are not finite, we can generalize MLLP to a tempered MLLP by taking subordination with respect to a tempered stable subordinator (hereafter TSS). TSS are infinitely divisible, have exponentially decaying tail probabilities and have all moments finite. These properties are obtained on the cost of self-similarity. 
Let $S_{\alpha, \mu}(t)$ be the TSS with index $\alpha \in(0,1)$ and  tempering parameter $\mu$.
TSS are obtained by exponential tempering in the distribution of stable processes (see e.g. Rosinski, 2007).
Further, the LT of density of TSS (see Meerschaert et al. 2013) is
\begin{equation}\label{tem-LT}
\mathbb{E}(e^{-uS_{\alpha, \mu}(t)}) = e^{-t\big((u+\mu)^{\alpha}-\mu^{\alpha}\big)}. 
\end{equation}
We define, tempered MLLP as follows.

\begin{definition}[Tempered MLLP]
Tempered MLLP is defined by subordinating TSS with independent gamma process such that
\begin{equation}
M^*_{\alpha, \lambda, \mu}(t): = S_{\alpha, \mu}(G_{\lambda,1}(t)),\; \alpha\in(0,1),\; \lambda>0,\;\mu>0,\;t\geq 0.
\end{equation}
\end{definition}

By using the standard conditioning argument with \eqref{tem-LT}, we obtain
\begin{equation}
\mathbb{E}(e^{-u M^*_{\alpha, \lambda, \mu}(t)}) = \left(\frac{\lambda}{\lambda - \mu^{\alpha}+ (\mu+u)^{\alpha}}\right)^t.
\end{equation}
Without proof, we state the following result for marginal density and L\'evy density for tempered MLLP. The proof is analogous to Propositions \ref{marginal-density} and \ref{levy-density}.  

\begin{proposition}
The marginal density function and L\'evy density for tempered MLLP are given by
\begin{equation*}
f_{M^*_{\alpha, \lambda, \mu}(t)}(x) = \lambda^t e^{-\mu x} \sum_{k=0}^{\infty} (\mu^{\alpha}-\lambda)^k \frac{\Gamma(t+k)}{\Gamma(k+1)\Gamma(t)} \frac{x^{\alpha(t+k)-1}}{\Gamma(\alpha(t+k))}.
\end{equation*}
Further, the L\'evy density is 
\begin{align*} \label{MLLP-LevyMeasure}
\nu^*(dx) = \frac{\alpha e^{-\mu x}}{x}E_{\alpha,1}((\mu^{\alpha}-\lambda) x^{\alpha}), x>0.
\end{align*} 
\end{proposition}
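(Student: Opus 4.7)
The plan is to mimic the two arguments for the untempered process, exploiting the fact that the Laplace transform of $M^*_{\alpha,\lambda,\mu}(t)$ has exactly the same structure as the MLLP LT but with $u^\alpha$ replaced by $(u+\mu)^\alpha - \mu^\alpha$. The first step is to rewrite
\[
\tilde f_{M^*_{\alpha,\lambda,\mu}(t)}(u) = \left(\frac{\lambda}{\lambda-\mu^\alpha+(\mu+u)^\alpha}\right)^t = \lambda^t (\mu+u)^{-\alpha t}\left(1 - \frac{\mu^\alpha-\lambda}{(\mu+u)^\alpha}\right)^{-t},
\]
and then to apply the generalized binomial series $(1-z)^{-t}=\sum_{k\ge 0}\frac{\Gamma(t+k)}{\Gamma(t)\Gamma(k+1)}z^k$ with $z=(\mu^\alpha-\lambda)/(\mu+u)^\alpha$, which produces
\[
\tilde f_{M^*_{\alpha,\lambda,\mu}(t)}(u)=\lambda^t\sum_{k=0}^{\infty}(\mu^\alpha-\lambda)^k\frac{\Gamma(t+k)}{\Gamma(t)\Gamma(k+1)}(\mu+u)^{-\alpha(t+k)}.
\]
Inverting term-by-term using $\mathcal{L}^{-1}\bigl((\mu+u)^{-a}\bigr)(x)=e^{-\mu x}x^{a-1}/\Gamma(a)$ for $a>0$ gives the claimed series for $f_{M^*_{\alpha,\lambda,\mu}(t)}(x)$; the only point that needs justification beyond Proposition \ref{marginal-density} is that the shift $\mu+u$ in the denominator is what produces the exponential tempering factor $e^{-\mu x}$.

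For the Lévy density I will use the same subordinator formula $\nu^*(dx)=\lim_{t\downarrow 0}\frac{1}{t}f_{M^*_{\alpha,\lambda,\mu}(t)}(x)$ that was invoked in Proposition \ref{levy-density}. Since the factor $\lambda^t e^{-\mu x}$ converges to $e^{-\mu x}$ as $t\downarrow 0$, it can be pulled out, and one is left with $\lim_{t\downarrow 0}\frac{\Gamma(t+k)}{t\Gamma(t)\Gamma(k+1)\Gamma(\alpha(t+k))}$ to evaluate for each $k$. Using $t\Gamma(t)=\Gamma(t+1)$, for $k\ge 1$ this equals $\frac{\Gamma(k)}{k!\,\Gamma(\alpha k)}=\frac{1}{k\Gamma(\alpha k)}=\frac{\alpha}{\Gamma(\alpha k+1)}$, and for $k=0$ the indeterminate form resolves as $\lim_{t\downarrow 0}\frac{1}{t\Gamma(\alpha t)}=\lim_{t\downarrow 0}\frac{\alpha}{\Gamma(\alpha t+1)}=\alpha$. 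Collecting terms and recognizing the resulting series as the Mittag-Leffler function gives
\[
\nu^*(dx)=e^{-\mu x}\sum_{k=0}^{\infty}\frac{\alpha(\mu^\alpha-\lambda)^k x^{\alpha k-1}}{\Gamma(\alpha k+1)}=\frac{\alpha e^{-\mu x}}{x}E_{\alpha,1}\bigl((\mu^\alpha-\lambda)x^\alpha\bigr).
\]

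The only mild obstacle is justifying termwise Laplace inversion and the termwise passage of $\lim_{t\downarrow 0}$ under the sum: this can be handled by noting that $|\mu^\alpha-\lambda|<\mu^\alpha+\lambda$ and that the dominating bounds used in Proposition \ref{marginal-density} apply uniformly in $t$ on a neighborhood of $0$, so absolute convergence on $(0,\infty)$ together with Fubini and dominated convergence legitimate both interchanges. Beyond this, the argument is entirely analogous to Propositions \ref{marginal-density} and \ref{levy-density}, as the paper indicates.
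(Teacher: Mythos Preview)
Your proposal is correct and follows exactly the route the paper indicates: you reproduce the arguments of Propositions~\ref{marginal-density} and~\ref{levy-density} with $u^{\alpha}$ replaced by $(\mu+u)^{\alpha}-\mu^{\alpha}$, noting that the shift in the Laplace variable yields the tempering factor $e^{-\mu x}$ upon inversion. Since the paper gives no proof beyond the remark that it is analogous to those two propositions, your write-up (including the explicit handling of the $k=0$ term and the remarks on justifying the interchanges) is in fact more detailed than what the paper provides.
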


We have $\mathbb{E}G_{\lambda,1}(t) = t/\lambda$ and var$(G_{\lambda,1}(t)) = t/\lambda^2.$ Further, $\mathbb{E}(S_{\alpha,\mu}(t)) = \frac{\alpha t}{\mu^{1-\alpha}}$ and var$(S_{\alpha,\mu}(t)) = \frac{\alpha(1-\alpha)t}{\mu^{2-\alpha}}$. Using the information above and with the help of conditioning argument, we have $\mathbb{E}(M^*_{\alpha, \lambda, \mu}(t)) = \frac{\alpha t}{\lambda\mu^{1-\alpha}}$ and var$(M^*_{\alpha, \lambda, \mu}(t)) = \frac{\alpha(1-\alpha)t}{\lambda\mu^{2-\alpha}} + \frac{\alpha^2 t}{\lambda^2\mu^{2-\alpha}}. $ Moreover, for tempered MLLP all order moments are finite and tail probability decays exponentially.
\vtwo
\noindent {\bf \Large References}
\vone
\noindent
\begin{namelist}{xxx}

\item{}  Applebaum, D. 2009. { L\'evy Processes and Stochastic Calculus}. 2nd ed., {Cambridge University Press}, Cambridge, U.K.

\item {} Barndorff-Nielsen, O. E. 2000. Probability densities and L\'evy densities. Research 18, Aarhus Univ., Centre for Mathematical Physics and Stochastics (MaPhySto).

\item {} Beghin, L. and  Orsingher, E. 2009. Fractional Poisson processes and related random motions. Electron. J.
Probab., 14.  1790�-1826.

\item {} Bertoin, J. 1996. L\'evy Processes. Cambridge University Press, Cambridge.

\item {} Cahoy, D. O., Uchaikin, V. V. and Woyczynski, W. A. 2010.  Parameter estimation for fractional Poisson processes.
J. Statist. Plann. Inf. 140. 3106--3120.

\item{} Feller, W. 1971. Introduction to Probability Theory and its Applications. Vol. II. John
Wiley, New York.


\item {} Gorenflo, R., Kilbas, A. A., Mainardi, F. and Rogosin, S. V. 2014. Mittag-Leffler Functions, Related Topics and Applications. Springer, New York.

\item{} Kanter, M. 1975. Stable densities under change of scale and total variation inequlities. Annals of Probability 3, 697--707.

\item{} Kozubowski, T. J., Meerschaert, M. M. and Podgorski, K. 2006. Fractional Laplace motion. Adv. Appl. Prob. 38, 451-464.

\item {} Laskin, N. 2003. Fractional Poisson process. Commun. Nonlinear Sci. Numer. Simul. 8, 201--213.

\item{}  Meerschaert, M. M., Nane, E., Vellaisamy, P. 2013. Transient anamolous subdiffusions  on bounded domains. {Proc. Amer. Math. Soc.}, 141, 699-710.
 
\item{}  Meerschaert, M. M., Nane, E., Vellaisamy, P. 2011. The fractional Poisson process and the inverse stable subordinator.
Electron. J. Probab., 16 , 1600--1620.

\item {} Pillai, R. N. 1990. On Mittag-Leffler functions and related distributions. Ann. Inst. Statist. Math. 42, 157--161.

\item {}  Repin, O.N. and Saichev, A.I. 2000 Fractional Poisson law. Radiophys. and Quantum Electronics. 43, 738--741.

\item {} Rosi\'nski, J., 2007.  {Tempering stable processes}. {Stochastic Process  Appl.} { 117},  677--707.

\item {} Sato, K.-I. 1999. L\'evy Processes and Infinitely Divisible Distributions. Cambridge University
Press.
 \end{namelist}
\end{document}